\documentclass[a4paper,11pt]{article}

\usepackage{amsmath,amssymb,amsthm}

\topmargin=0cm
\evensidemargin=0truecm
\oddsidemargin=0truecm
\textheight= 21cm
\textwidth =16cm

\numberwithin{equation}{section}

\newtheorem{thm}{Theorem}[section]

\newtheorem{lem}[thm]{Lemma}
\newtheorem{cor}[thm]{Corollary}

\newtheorem*{rem}{Remark}

\theoremstyle{definition}

\allowdisplaybreaks

\pagestyle{plain}

\begin{document}
\title{Remarks on the Rellich inequality}

\author{Shuji Machihara\,$^1$, Tohru Ozawa\,$^2$ and Hidemitsu Wadade\,$^3$}
\date{\small\it $^1$Department of Mathematics, Saitama University, 255 Shimookubo, Sakuraku, Saitama 338-8570 Japan (machihar@rimath.saitama-u.ac.jp)\vspace{.3cm}\\
$^2$Department of Applied Physics, Waseda University, Shinjuku, Tokyo 169-8555, Japan (txozawa@waseda.jp)\vspace{.3cm}\\
$^3$Faculty of Mechanical Engineering, Institute of Science and Engineering, Kanazawa University,\\
Kakuma, Kanazawa, Ishikawa 920-1192, Japan\\(wadade@se.kanazawa-u.ac.jp)}

\maketitle

\begin{abstract}

We study the Rellich inequalities in the framework of equalities. We
present equalities which imply the Rellich inequalities by dropping remainders. This provides a simple and
direct understanding of the Rellich inequalities as well as the nonexistence of nontrivial extremisers. 

\medskip

\noindent
2000 Mathematics Subject Classification\,:\,Primary 26D10, Secondary 46E35, 35A23.

\medskip

\noindent 
Key words\,:\,Rellich inequality, Hardy inequality. 
\end{abstract}

\section{Introduction and the main results}
\noindent

In this paper, we prove some equalities which yield the Rellich inequality 
by dropping remainder terms in $L^2(\Bbb R^n)$ setting for $n\geq 5$. 
Moreover, a characterization is given on $H^2$-functions which make vanishing remainders 
on the basis of simple partial differential equations. Our presentation based on 
equalities presumably gives a clear picture of how the Rellich inequality follows 
with sharp remainders and implies the nonexistence of nontrivial extremisers. 

\medskip

The Rellich inequality that we study in this paper is the following\,:
\begin{align}\label{Rellich}
&\left\|
\frac{f}{|x|^2}
\right\|_{L^2(\Bbb R^n)}
\leq\frac{4}{n(n-4)}\|\Delta f\|_{L^2(\Bbb R^n)}
\end{align}
for all $f\in H^2(\Bbb R^n)$ with $n\geq 5$, where $H^s=H^s(\Bbb R^n)$ 
is the standard Sobolev space of order $s\in\Bbb R$ defined as $(1-\Delta)^{-s/2}L^2(\Bbb R^n)$ and 
$\Delta=\sum_{j=1}^n \partial_j^2$ is the Laplacian in $\Bbb R^n$. 
The inequality \eqref{Rellich} is basic in the self-adjointness problem of 
the Schr\"odinger operators with singular potentials such as $V(x)=\lambda |x|^{-2}$ 
with $\lambda>-\frac{n(n-4)}{4}$ (See \cite{a-s, be, ca, da, e-l, g-g-m, gh-m, sch, t-z, ya} and references therein for related subjects). 
Moreover, there is a large literature on \eqref{Rellich} 
in connection with Hardy type inequalities \cite{a-c-r, BD, b-m, b-v, DV, FT, FS, g-p, gh-m1, h, II, IIO, k-w, m-o-w, m-o-w2, m-o-w3, m-o-w4, m, o-s, ta, z}. 

\medskip

In an earlier work \cite{m-o-w4}, we studied the Hardy inequality in $L^2$ setting 
by means of the equalities 
\begin{align}
\notag\left(\frac{n-2}{2}\right)^2\left\|
\frac{f}{|x|}
\right\|_{L^2(\Bbb R^n)}^2&=
\|\partial_r f\|_{L^2(\Bbb R^n)}^2-\|
|x|^{-\frac{n}{2}+1}\partial_r(|x|^{\frac{n}{2}-1}f)
\|_{L^2(\Bbb R^n)}^2\\
&\label{l2-earlier}=\|\partial_r f\|_{L^2(\Bbb R^n)}^2-\left\|
\partial_r f+\frac{n-2}{2|x|}f
\right\|_{L^2(\Bbb R^n)}^2
\end{align}
for all $f\in H^1(\Bbb R^n)$ with $n\geq 3$, where $\partial_r=\frac{x}{|x|}\cdot\nabla=\sum_{j=1}^n\frac{x_j}{|x|}\partial_j$ denotes the radial derivative in $\Bbb R^n$. 

\medskip

The purpose of this paper is to present the corresponding equalities on 
the Rellich inequality \eqref{Rellich} and characterize the equality case 
in terms of vanishing conditions of remainders. To be more specific, 
we prove the following theorem. 

\begin{thm}\label{thm1}
Let $n\geq 5$. Then the following equalities 
\begin{align}
&\notag\left(
\frac{n(n-4)}{4}
\right)^2
\left\|
\frac{f}{|x|^2}
\right\|_{L^2(\Bbb R^n)}^2\\
&\notag=\left\|
\partial_r^2 f+\frac{n-1}{|x|}\partial_r f
\right\|_{L^2(\Bbb R^n)}^2
-\left\|
\partial_r^2 f+\frac{n-1}{|x|}\partial_r f+\frac{n(n-4)}{4|x|^2}f
\right\|_{L^2(\Bbb R^n)}^2\\
&\notag\phantom{=}-\frac{n(n-4)}{2}\left\|
\frac{1}{|x|}\partial_r f+\frac{n-4}{2|x|^2}f
\right\|_{L^2(\Bbb R^n)}^2\\
&\notag=\left\|
|x|^{-n+1}\partial_r(|x|^{n-1}\partial_r f)
\right\|_{L^2(\Bbb R^n)}^2
-\left\|
|x|^{-\frac{n}{2}+1}\partial_r\left(
|x|^{-1}\partial_r(|x|^{\frac{n}{2}}f)
\right)
\right\|_{L^2(\Bbb R^n)}^2\\
&\notag\phantom{=}-\frac{n(n-4)}{2}\left\|
|x|^{-\frac{n}{2}+1}\partial_r(|x|^{\frac{n}{2}-2}f)
\right\|_{L^2(\Bbb R^n)}^2\\
&\notag=\left\|
|x|^{-n+1}\partial_r(|x|^{n-1}\partial_r f)
\right\|_{L^2(\Bbb R^n)}^2
-\left\|
|x|^{-\frac{n}{2}-1}\partial_r\left(
|x|^{3}\partial_r(|x|^{\frac{n-4}{2}}f)
\right)
\right\|_{L^2(\Bbb R^n)}^2\\
&\label{thm1-rellich}\phantom{=}-\frac{n(n-4)}{2}\left\|
|x|^{-\frac{n}{2}+1}\partial_r(|x|^{\frac{n}{2}-2}f)
\right\|_{L^2(\Bbb R^n)}^2
\end{align}
hold for all $f\in H^2(\Bbb R^n)$. Moreover, there does not exist $f\in H^2(\Bbb R^n)$ satisfying 
\begin{align}
\left(
\frac{n(n-4)}{4}
\right)^2\left\|
\frac{f}{|x|^2}
\right\|_{L^2(\Bbb R^n)}^2
&\notag=\left\|
\partial_r^2 f+\frac{n-1}{|x|}\partial_r f
\right\|_{L^2(\Bbb R^n)}^2\\
&\label{assume-eq}=\left\|
|x|^{-n+1}\partial_r(|x|^{n-1}\partial_r f)
\right\|_{L^2(\Bbb R^n)}^2
\end{align}
as well as 
\begin{align}\label{assume-eq2}
\left(\frac{n(n-4)}{4}\right)^2
\left\|
\frac{f}{|x|^2}
\right\|_{L^2(\Bbb R^n)}^2
=\|\Delta f\|_{L^2(\Bbb R^n)}^2
\end{align}
except $f=0$. 
\end{thm}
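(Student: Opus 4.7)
The plan is to establish the first equality of \eqref{thm1-rellich} by a single radial integration by parts, obtain the three remaining equalities as algebraic rewrites, and then exploit the vanishing-remainder conditions to rule out nontrivial extremisers. By density it suffices to argue for $f\in C_c^\infty(\Bbb R^n\setminus\{0\})$, with the Rellich inequality \eqref{Rellich} ensuring that every term extends continuously to $H^2(\Bbb R^n)$.

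For the first equality I would expand the two nonnegative subtracted terms on the right-hand side. All purely quadratic contributions cancel, leaving the three integrals $\int \frac{f\,\partial_r^2 f}{|x|^2}\,dx$, $\int \frac{f\,\partial_r f}{|x|^3}\,dx$ and $\int \frac{(\partial_r f)^2}{|x|^2}\,dx$ together with explicit multiples of $\|f/|x|^2\|_{L^2}^2$. The adjoint relation $\int g\,\partial_r h\,dx = -\int h\bigl(\partial_r g + \frac{n-1}{|x|}g\bigr)\,dx$, a consequence of $\nabla\cdot(x/|x|) = (n-1)/|x|$, yields $\int \frac{f\,\partial_r f}{|x|^3}\,dx = -\frac{n-4}{2}\|f/|x|^2\|_{L^2}^2$, and applied with $g = f/|x|^2$, $h = \partial_r f$ gives $\int \frac{f\,\partial_r^2 f}{|x|^2}\,dx = -\|\partial_r f/|x|\|_{L^2}^2 + \frac{(n-3)(n-4)}{2}\|f/|x|^2\|_{L^2}^2$. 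Substituting collapses the identity to the numerical cancellation $\frac{n-4}{2}[(n-2)+(n-3)-(2n-5)] = 0$. The remaining three equalities then follow at once from the Leibniz identities $|x|^{-n+1}\partial_r(|x|^{n-1}\partial_r f) = \partial_r^2 f + \frac{n-1}{|x|}\partial_r f$, $|x|^{-n/2+1}\partial_r\bigl(|x|^{-1}\partial_r(|x|^{n/2}f)\bigr) = \partial_r^2 f + \frac{n-1}{|x|}\partial_r f + \frac{n(n-4)}{4|x|^2}f$, $|x|^{-n/2+1}\partial_r(|x|^{n/2-2}f) = \frac{1}{|x|}\partial_r f + \frac{n-4}{2|x|^2}f$, and the analogous expansion of $|x|^{-n/2-1}\partial_r\bigl(|x|^3\partial_r(|x|^{(n-4)/2}f)\bigr)$.

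For nonexistence in \eqref{assume-eq}, equality there forces both subtracted terms of the first line of \eqref{thm1-rellich} to vanish; in particular $\frac{1}{|x|}\partial_r f + \frac{n-4}{2|x|^2}f = 0$ a.e., i.e.\ $\partial_r\bigl(|x|^{(n-4)/2}f\bigr) = 0$. Hence $f(x) = |x|^{-(n-4)/2}g(x/|x|)$ for some $g$ on $S^{n-1}$, and $\|f\|_{L^2}^2 = \int_{S^{n-1}}|g|^2\,d\omega\cdot\int_0^\infty r^3\,dr$ is finite only when $g\equiv 0$, whence $f=0$. For \eqref{assume-eq2} I would first prove the intermediate bound $\|\Delta f\|_{L^2}^2 \geq \|\partial_r^2 f + \frac{n-1}{|x|}\partial_r f\|_{L^2}^2$ by expanding $f = \sum_{\ell,k}f_{\ell,k}(r)Y_{\ell,k}(\omega)$ into spherical harmonics: in each sector the cross term between the radial operator applied to $f_{\ell,k}$ and the angular contribution $-\ell(\ell+n-2)f_{\ell,k}/r^2$ equals $2\ell(\ell+n-2)\bigl[\int_0^\infty(f_{\ell,k}')^2 r^{n-3}\,dr + (n-4)\int_0^\infty f_{\ell,k}^2 r^{n-5}\,dr\bigr] \geq 0$ by the same integration by parts. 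Combining this with the first equality of \eqref{thm1-rellich} forces \eqref{assume-eq}, reducing the problem to the previous case. The main subtlety will be rigorously justifying every integration by parts on $H^2(\Bbb R^n)$, given the singularity at the origin and decay at infinity.
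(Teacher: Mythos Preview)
Your argument is correct.  For the chain of equalities you proceed by a direct expansion of the two subtracted remainders and then reduce everything to the two integration-by-parts identities for $\int |x|^{-3}f\,\overline{\partial_r f}\,dx$ and $\int |x|^{-2}f\,\overline{\partial_r^2 f}\,dx$; the paper instead first isolates the single scalar identity
\[
\frac{n(n-4)}{4}\left\|\frac{f}{|x|^2}\right\|_{L^2}^2
=-\operatorname{Re}\!\left(\frac{f}{|x|^2}\,\Big|\,\partial_r^2 f+\frac{n-1}{|x|}\partial_r f\right)
-\bigl\||x|^{-\frac{n}{2}+1}\partial_r(|x|^{\frac{n}{2}-2}f)\bigr\|_{L^2}^2,
\]
obtained via the Hardy equality \eqref{l2-earlier} applied to $f/|x|$, and then invokes the abstract polarization Lemma~\ref{lem-bector} to produce all three displayed forms at once.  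Both routes rest on the same radial integrations by parts; the paper's organization around Lemma~\ref{lem-bector} makes the structure parallel to the Hardy case, while your direct expansion is more self-contained and avoids the auxiliary appeal to \eqref{l2-earlier}.  Your treatment of the case \eqref{assume-eq} is the same as the paper's (you use $f\in L^2$ where the paper uses $f/|x|^2\in L^2$, which is immaterial).  The genuine difference is in \eqref{assume-eq2}: you prove the intermediate inequality $\|\Delta f\|_{L^2}^2\ge\|\partial_r^2 f+\tfrac{n-1}{|x|}\partial_r f\|_{L^2}^2$ by a spherical-harmonic decomposition and a one-dimensional integration by parts in each $\ell$-sector, whereas the paper deduces it from an exact $L^2$ decomposition of $\|\Delta f\|_{L^2}^2$ in terms of the spherical derivatives $L_j=\partial_j-\tfrac{x_j}{|x|}\partial_r$ (Theorem~\ref{thm2} and Corollary~\ref{cor-rellich}).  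Your spherical-harmonic argument is shorter and entirely adequate for the nonexistence statement; the paper's identity is sharper in that it exhibits explicit nonnegative remainders and characterizes the equality case as radiality.
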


\medskip

Equalities \eqref{thm1-rellich} imply \eqref{Rellich} by the following theorem and its corollary. 
For $j$ with $1\leq j\leq n$, we denote by $L_j$ a spherical derivative defined by 
\begin{align*}
L_j=\partial_j-\frac{x_j}{|x|}\partial_r=\partial_j-\sum_{k=1}^n\frac{x_jx_k}{|x|^2}\partial_k. 
\end{align*}

\begin{thm}\label{thm2}
Let $n\geq 5$. Then the following equalities 
\begin{align}
\|\Delta f\|_{L^2(\Bbb R^n)}^2
&\notag=\left\|
\partial_r^2 f+\frac{n-1}{|x|}\partial_r f
\right\|_{L^2(\Bbb R^n)}^2
+\left\|
\sum_{j=1}^n L_j^2 f
\right\|_{L^2(\Bbb R^n)}^2\\
&\notag\phantom{=}
+\frac{n(n-4)}{2}\sum_{j=1}^n\left\|
\frac{1}{|x|}L_j f
\right\|_{L^2(\Bbb R^n)}^2
+2\sum_{j=1}^n\left\|
\partial_r L_jf+\frac{n-2}{2|x|}L_j f
\right\|_{L^2(\Bbb R^n)}^2\\
&\notag=\|
|x|^{-n+1}\partial_r(|x|^{n-1}\partial_r f)
\|_{L^2(\Bbb R^n)}^2
+\left\|
\sum_{j=1}^n L_j^2 f
\right\|_{L^2(\Bbb R^n)}^2\\
&\label{thm2-state}\phantom{=}+\frac{n(n-4)}{2}\sum_{j=1}^n\left\|\frac{1}{|x|}L_j f\right\|_{L^2(\Bbb R^n)}^2
+2\sum_{j=1}^n\|
|x|^{-\frac{n}{2}+1}\partial_r(|x|^{\frac{n}{2}-1}L_j f)
\|_{L^2(\Bbb R^n)}^2
\end{align}
hold for all $f\in H^2(\Bbb R^n)$. 
\end{thm}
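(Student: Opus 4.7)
The plan is to split $\Delta f$ into a purely radial part and a tangential part, take the $L^2$-norm of the sum, and reduce the theorem to a single cross-term identity, which I would then handle via spherical-harmonic decomposition.

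\textbf{Step 1 (pointwise decomposition of $\Delta$).} Writing $\partial_j = \omega_j\partial_r+L_j$ with $\omega_j := x_j/|x|$ and expanding $\partial_j^2$ coordinate by coordinate, the elementary identities $\partial_r\omega_j = 0$ (as $\omega_j$ is homogeneous of degree zero), $\sum_j\omega_j\partial_j = \partial_r$, and $\sum_j\partial_j\omega_j = (n-1)/|x|$ make the cross contributions telescope, yielding
\begin{equation*}
\Delta f = \Bigl(\partial_r^2 f+\frac{n-1}{|x|}\partial_r f\Bigr)+\sum_{j=1}^n L_j^2 f =: A+B.
\end{equation*}
Hence $\|\Delta f\|_{L^2}^2 = \|A\|_{L^2}^2+\|B\|_{L^2}^2+2\,\mathrm{Re}\,\langle A,B\rangle_{L^2}$.

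\textbf{Step 2 (reduction to a single cross-term identity).} Expanding the last square on the right-hand side of \eqref{thm2-state} and integrating $\int|x|^{-1}\partial_r|L_jf|^2\,dx$ by parts against $\partial_r^{\,*}=-\partial_r-(n-1)/|x|$ yields
\begin{equation*}
2\Bigl\|\partial_r L_jf+\frac{n-2}{2|x|}L_jf\Bigr\|_{L^2}^2 = 2\|\partial_r L_jf\|_{L^2}^2-\frac{(n-2)^2}{2}\Bigl\|\frac{L_jf}{|x|}\Bigr\|_{L^2}^2.
\end{equation*}
Summing in $j$ and combining with the $\frac{n(n-4)}{2}\sum_j\|L_jf/|x|\|_{L^2}^2$-term (and using $n(n-4)-(n-2)^2=-4$), the first equality of \eqref{thm2-state} collapses to
\begin{equation*}
\mathrm{Re}\,\langle A,B\rangle_{L^2} = \sum_{j=1}^n\|\partial_r L_jf\|_{L^2}^2-\sum_{j=1}^n\Bigl\|\frac{L_jf}{|x|}\Bigr\|_{L^2}^2.
\end{equation*}

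\textbf{Step 3 (matching both sides via spherical harmonics).} I would decompose $f(x) = \sum_{l,k}f_{l,k}(r)Y_{l,k}(\omega)$ with orthonormal spherical harmonics $Y_{l,k}$ satisfying $\Delta_{S^{n-1}}Y_{l,k}=-l(l+n-2)Y_{l,k}$. Then $A = \sum(f_{l,k}''+\frac{n-1}{r}f_{l,k}')Y_{l,k}$ and $B = -\sum\frac{l(l+n-2)}{r^2}f_{l,k}Y_{l,k}$, so sphere-orthogonality and two integrations by parts in $r$ (with boundary terms vanishing in the $H^2$-class) give
\begin{equation*}
\mathrm{Re}\,\langle A,B\rangle_{L^2} = \sum_{l,k}l(l+n-2)\Bigl[\int_0^\infty r^{n-3}|f_{l,k}'|^2\,dr+(n-4)\int_0^\infty r^{n-5}|f_{l,k}|^2\,dr\Bigr].
\end{equation*}
For the right-hand side, the facts that $L_j$ annihilates radial functions (so $L_j(gY) = g\,L_jY$), that $L_jY$ is degree $-1$ homogeneous (so $\partial_r(L_jY)=-|x|^{-1}L_jY$ by Euler), and that $\int_{S^{n-1}}\sum_j|L_jY_{l,k}|^2\,d\sigma = l(l+n-2)$ (from integration by parts of the spherical Laplacian), reduce $\sum_j\|\partial_rL_jf\|_{L^2}^2-\sum_j\|L_jf/|x|\|_{L^2}^2$ to exactly the same radial expression, closing the first equality.

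\textbf{Step 4 (second equality and main obstacle).} The second equality of \eqref{thm2-state} is cosmetic: it follows from the factorisations $\partial_r^2 f+\frac{n-1}{|x|}\partial_r f = |x|^{-n+1}\partial_r(|x|^{n-1}\partial_r f)$ and $\partial_r g+\frac{n-2}{2|x|}g = |x|^{-\frac{n}{2}+1}\partial_r(|x|^{\frac{n}{2}-1}g)$ applied with $g=L_jf$, both already in use in Theorem~\ref{thm1}. The main obstacle is Step~3: the operator $L_j$ is not antisymmetric on $L^2(\Bbb R^n)$ (one computes $L_j^{\,*} = -L_j+(n-1)\omega_j/|x|$), so a direct integration by parts against $B = \sum_j L_j^2 f$ creates a thicket of first-order correction terms. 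The spherical-harmonic route sidesteps this cleanly but requires careful bookkeeping of $r$-boundary contributions within the $H^2(\Bbb R^n)$-framework and a mode-by-mode verification of the resulting radial integrals.
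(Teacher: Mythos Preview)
Your argument is correct, but your route to the key cross-term identity diverges from the paper's. You both start from $\Delta f=A+B$ and reduce the theorem to
\[
\mathrm{Re}\,(A\mid B)=\sum_{j}\|\partial_r L_jf\|_{L^2}^2-\sum_{j}\Bigl\|\frac{L_jf}{|x|}\Bigr\|_{L^2}^2,
\]
but you reach this from the \emph{right} (expanding the squares in the theorem's statement) and then verify it mode by mode via spherical harmonics, whereas the paper reaches it from the \emph{left} by pure commutator algebra, without ever decomposing $f$. Concretely, the paper integrates by parts once to get $(g\mid L_jh_j)=-(L_jg\mid h_j)+(n-1)(g\mid \tfrac{x_j}{|x|^2}h_j)$, observes that the correction term vanishes after summation because $\sum_j x_jL_j=0$, and then uses the commutation $L_j\partial_r=(\partial_r+\tfrac{1}{|x|})L_j$ together with $L_j(|x|^\lambda u)=|x|^\lambda L_ju$ to compute $L_jA=\partial_r^2h_j+\tfrac{n+1}{|x|}\partial_rh_j+\tfrac{n-1}{|x|^2}h_j$; two radial integrations by parts (equivalently, one application of the Hardy equality \eqref{l2-earlier} to $h_j$) then give the identity above directly.

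In other words, the ``thicket of first-order correction terms'' you flagged as the main obstacle does not materialise: the single extra term coming from $L_j^{\,*}=-L_j+(n-1)\omega_j/|x|$ is annihilated by $\sum_j x_jL_j=0$, and the rest is clean. What the paper's approach buys is a coordinate-free proof that makes the link to the first-order Hardy equality \eqref{l2-earlier} explicit and avoids any mode-by-mode bookkeeping or convergence issues for the spherical-harmonic expansion in $H^2$. Your approach, on the other hand, is entirely self-contained (it does not invoke \eqref{l2-earlier}) and makes the dependence on the spherical eigenvalue $l(l+n-2)$ transparent; it would also adapt naturally to restricted angular sectors or to proving the inequality eigenspace by eigenspace.
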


\begin{cor}\label{cor-rellich}
Let $n\geq 5$. Then the inequality 
\begin{align}\label{rellich-cor}
\|\Delta f\|_{L^2(\Bbb R^n)}^2\geq \left\|\partial_r^2 f+\frac{n-1}{|x|}\partial_r f\right\|_{L^2(\Bbb R^n)}^2
\end{align}
holds for all $f\in H^2(\Bbb R^n)$. In \eqref{rellich-cor}, equality holds if and only if $f$ is radial. 
\end{cor}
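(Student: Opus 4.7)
The plan is to read Corollary \ref{cor-rellich} as a direct consequence of Theorem \ref{thm2}: the first identity in \eqref{thm2-state} expresses $\|\Delta f\|_{L^2(\Bbb R^n)}^2$ as the sum of $\|\partial_r^2 f+\frac{n-1}{|x|}\partial_r f\|_{L^2(\Bbb R^n)}^2$ and three manifestly nonnegative remainder terms, namely
\begin{align*}
R(f)=\left\|\sum_{j=1}^n L_j^2 f\right\|_{L^2(\Bbb R^n)}^2
+\frac{n(n-4)}{2}\sum_{j=1}^n\left\|\frac{1}{|x|}L_j f\right\|_{L^2(\Bbb R^n)}^2
+2\sum_{j=1}^n\left\|\partial_r L_jf+\frac{n-2}{2|x|}L_j f\right\|_{L^2(\Bbb R^n)}^2.
\end{align*}
Since $n\geq 5$ makes the coefficient $n(n-4)/2$ strictly positive, dropping $R(f)\geq 0$ yields \eqref{rellich-cor} immediately.

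For the characterization of equality, I would argue that equality in \eqref{rellich-cor} is equivalent to $R(f)=0$, and then exploit the middle summand of $R(f)$. Indeed, if $R(f)=0$, then $\sum_{j=1}^n\|\,|x|^{-1}L_j f\,\|_{L^2(\Bbb R^n)}^2=0$, so $L_j f=0$ a.e.\ on $\Bbb R^n\setminus\{0\}$ for every $j=1,\dots,n$. Since $L_j=\partial_j-\frac{x_j}{|x|}\partial_r$ is the tangential part of $\partial_j$, the vanishing of all $L_j f$ means that $\nabla f$ is parallel to $x/|x|$ wherever it exists; equivalently, $f$ is constant on every sphere centered at the origin, i.e., $f$ is radial.

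Conversely, if $f\in H^2(\Bbb R^n)$ is radial, then $L_j f=0$ for all $j$ (the tangential derivatives of a radial function vanish), hence each of the three summands composing $R(f)$ is zero, and equality holds in \eqref{rellich-cor}. Combining both implications gives the stated characterization.

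There is essentially no obstacle here: the heavy lifting has already been done in Theorem \ref{thm2}, and the only conceptual point to articulate carefully is that $L_j f=0$ for $j=1,\dots,n$ characterizes radial $H^2$-functions, which is a standard fact about the tangential/spherical components of the gradient.
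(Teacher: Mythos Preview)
Your proof is correct and follows essentially the same approach as the paper: both read the inequality directly from the identity \eqref{thm2-state} and characterize equality by the vanishing of the nonnegative remainder terms. The only minor difference is that you extract radiality from the middle summand (forcing $L_j f=0$ for all $j$), whereas the paper uses the first summand $\sum_{j=1}^n L_j^2 f=0$ together with its expression via the rotation vector fields $x_j\partial_k-x_k\partial_j$; your route is arguably a bit more direct.
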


We prove Theorems \ref{thm1} and \ref{thm2} in Sections 2 and 3, respectively. 
For simplicity, we prove the theorems for $f\in C^\infty_0(\,\Bbb R^n\setminus\{0\}\,;\Bbb C\,)$ since 
the proofs are completed by a density argument. The main idea of the proofs is given by the following lemma. 

\begin{lem}\label{lem-bector}
Let ${\cal H}$ be a vector space with Hermitian scalar product $(\cdot|\cdot)$. 
Also let $a\in\Bbb R$, $c>0$ and $u,v\in{\cal H}$. Then the following equalities are equivalent. 
\begin{align*}
&\|u\|^2=-c\operatorname{Re}(u|v)+a.\\
&\operatorname{Re}(u|u+cv)=a.\\
&\|cv\|^2=\|u+cv\|^2+\|u\|^2-2a.\\
&\frac{1}{c^2}\|u\|^2=\|v\|^2-\left\|v+\frac{1}{c}u\right\|^2+\frac{2a}{c^2}. 
\end{align*}
\end{lem}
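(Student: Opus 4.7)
The plan is to treat all four statements as elementary rearrangements of the single expansion
\begin{equation*}
\|u+cv\|^2 = \|u\|^2 + 2c\,\operatorname{Re}(u|v) + c^2\|v\|^2,
\end{equation*}
valid in any complex vector space equipped with a Hermitian scalar product. Since $c>0$ is a fixed real number and $a\in\mathbb{R}$, no positivity or analytic input is needed; everything reduces to algebraic bookkeeping, and I would organize the argument as a chain (1) $\Longleftrightarrow$ (2) $\Longleftrightarrow$ (3) $\Longleftrightarrow$ (4).

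First, (1) $\Longleftrightarrow$ (2) follows at once from the identity
\begin{equation*}
\operatorname{Re}(u|u+cv) = \|u\|^2 + c\,\operatorname{Re}(u|v),
\end{equation*}
so that $\operatorname{Re}(u|u+cv)=a$ rearranges to $\|u\|^2 = -c\,\operatorname{Re}(u|v)+a$.

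Next, for (2) $\Longleftrightarrow$ (3), I would use the squared-norm expansion to write $2c\,\operatorname{Re}(u|v) = \|u+cv\|^2 - \|u\|^2 - \|cv\|^2$. Substituting into the left-hand side of (2) gives
\begin{equation*}
\operatorname{Re}(u|u+cv) = \|u\|^2 + \tfrac{1}{2}\bigl(\|u+cv\|^2-\|u\|^2-\|cv\|^2\bigr),
\end{equation*}
so (2) is equivalent, after clearing the factor $1/2$, to $\|cv\|^2 = \|u+cv\|^2+\|u\|^2-2a$, which is (3). Finally, (3) $\Longleftrightarrow$ (4) is obtained by multiplying (4) by $c^2$ and using $c^2\|v+c^{-1}u\|^2 = \|u+cv\|^2$ together with $c^2\|v\|^2=\|cv\|^2$; the two displayed identities then coincide term by term.

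There is no real obstacle: the lemma is a purely algebraic catalogue of one expansion. Its utility, which I expect to appear in the subsequent sections, is that for each target Rellich-type identity one can choose vectors $u,v$ and constants $c>0$, $a\in\mathbb{R}$ so that form (1) is verified by a single integration by parts (producing the real inner product $\operatorname{Re}(u|v)$), while one of forms (3) or (4) displays the desired sum-of-squares equality with a manifestly non-negative remainder.
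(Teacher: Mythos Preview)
Your proof is correct and follows essentially the same route as the paper: both arguments reduce all four statements to a single quadratic expansion of $\|u+cv\|^2$, the paper packaging it as the identity $\|cv\|^2=\|u+cv\|^2+\|u\|^2-2\operatorname{Re}(u|u+cv)$ while you spell out the chain of equivalences explicitly.
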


\begin{proof}
The lemma follows from the equality 
\begin{align*}
\|cv\|^2=\|u+cv\|^2+\|u\|^2-2\operatorname{Re}(u|u+cv). 
\end{align*}
\end{proof}

\begin{rem}
The lemma was first formulated in \cite{m-o-w4} for $a=0$. 
In \cite{m-o-w4}, the equalities \eqref{l2-earlier} were derived from 
\begin{align*}
\int_{\Bbb R^n}\frac{|f(x)|^2}{|x|^2}dx
=-\frac{2}{n-2}\operatorname{Re}\int_{\Bbb R^n}\frac{f(x)}{|x|}\overline{\partial_r f(x)}dx 
\end{align*}
by applying the lemma with ${\cal H}=L^2(\Bbb R^n)$, $u=\frac{f}{|x|}$, $v=\partial_r f$, and $c=\frac{2}{n-2}$. 
\end{rem}

\section{Proof of Theorem \ref{thm1}}
\noindent

We introduce the standard polar coordinates $(r,\omega)=\left(|x|,\frac{x}{|x|}\right)\in(0,\infty)\times \Bbb S^{n-1}$ 
and the Lebesgue measure $\sigma$ on the unit sphere $\Bbb S^{n-1}$. 
We have by integration by parts 
\begin{align}
&\notag\int_{\Bbb R^n}\frac{|f(x)|^2}{|x|^4}dx\\
&\notag=\int_0^\infty r^{n-5}\int_{\Bbb S^{n-1}}|f(r\omega)|^2d\sigma(\omega)dr\\
&\notag=-\frac{2}{n-4}\operatorname{Re}\int_0^\infty r^{n-4}\int_{\Bbb S^{n-1}}f(r\omega)\overline{\omega\cdot\nabla f(r\omega)}d\sigma(\omega)dr\\
&\notag=\frac{2}{(n-3)(n-4)}\operatorname{Re}\int_0^\infty r^{n-3}\int_{\Bbb S^{n-1}}
\left(
|\omega\cdot\nabla f(r\omega)|^2
+f(r\omega)\overline{(\omega\cdot\nabla)^2 f(r\omega)}
\right)d\sigma(\omega)dr\\
&\label{first-by-parts}=\frac{2}{(n-3)(n-4)}
\left(
\left\|
\frac{1}{|x|}\partial_r f
\right\|_{L^2(\Bbb R^n)}^2
+\operatorname{Re}\int_{\Bbb R^n}\frac{f(x)}{|x|^2}\,\overline{\partial_r^2 f(x)}dx
\right).
\end{align}
The first norm on the right hand of the last equality in \eqref{first-by-parts} is rewritten as 
\begin{align}
\left\|\frac{1}{|x|}\partial_r f\right\|_{L^2(\Bbb R^n)}^2
&\notag=\left\|\partial_r\left(\frac{f}{|x|}\right)+\frac{f}{|x|^2}\right\|_{L^2(\Bbb R^n)}^2\\
&\label{eq1-int}=\left\|\partial_r\left(\frac{f}{|x|}\right)\right\|_{L^2(\Bbb R^n)}^2+
2\operatorname{Re}\left(\partial_r\left(\frac{f}{|x|}\right)\Bigg|\frac{f}{|x|^2}\right)
+\left\|\frac{f}{|x|^2}\right\|_{L^2(\Bbb R^n)}^2. 
\end{align}
We apply \eqref{l2-earlier} with $f$ replaced by $\frac{f}{|x|}$ to obtain 
\begin{align}
\label{eq2-int}\left\|
\partial_r\left(\frac{f}{|x|}\right)
\right\|_{L^2(\Bbb R^n)}^2
=\left(\frac{n-2}{2}\right)^2\left\|\frac{f}{|x|^2}\right\|_{L^2(\Bbb R^n)}^2
+\left\|
|x|^{-\frac{n}{2}+1}\partial_r(|x|^{\frac{n}{2}-2}f)
\right\|_{L^2(\Bbb R^n)}^2. 
\end{align}
Integrating by parts, we have 
\begin{align}
2\operatorname{Re}\left(
\partial_r\left(\frac{f}{|x|}\right)\Bigg|\frac{f}{|x|^2}
\right)&\notag=\int_{\Bbb R^n}\frac{1}{|x|}\partial_r\left(\frac{|f|^2}{|x|^2}\right)dx
=\int_{\Bbb R^n}\frac{x}{|x|^2}\cdot\nabla\left(\frac{|f|^2}{|x|^2}\right)dx\\
&\label{eq3-int}=-(n-2)\left\|\frac{f}{|x|^2}\right\|_{L^2(\Bbb R^n)}^2. 
\end{align}
By \eqref{eq2-int} and \eqref{eq3-int}, we rewrite \eqref{eq1-int} as 
\begin{align}
&\left\|
\frac{1}{|x|}\partial_r f
\right\|_{L^2(\Bbb R^n)}^2
=\left(\frac{n-4}{2}\right)^2
\left\|\frac{f}{|x|^2}\right\|_{L^2(\Bbb R^n)}^2
+\left\|
|x|^{-\frac{n}{2}+1}\partial_r(|x|^{\frac{n}{2}-2}f)
\right\|_{L^2(\Bbb R^n)}^2.\label{eq4-int}
\end{align}
The second integral on the right hand side of the last equality in \eqref{first-by-parts} is rewritten as 
\begin{align}
&\notag\operatorname{Re}\int_{\Bbb R^n}\frac{f(x)}{|x|^2}\overline{\partial_r^2 f(x)}dx\\
&=\operatorname{Re}\int_{\Bbb R^n}\frac{f(x)}{|x|^2}\overline{
\left(
\partial_r^2f(x)+\frac{n-1}{|x|}\partial_r f(x)
\right)
}dx-(n-1)\operatorname{Re}\int_{\Bbb R^n}\frac{f(x)}{|x|^3}\overline{\partial_r f(x)}dx,\label{eq5-int}
\end{align}
where the last integral is given by 
\begin{align}
&\operatorname{Re}\int_{\Bbb R^n}\frac{f}{|x|^3}\overline{\partial_r f}dx
=\frac{1}{2}\int_{\Bbb R^n}\frac{1}{|x|^3}\partial_r(|f|^2)dx=\frac{1}{2}\int_{\Bbb R^n}\frac{x}{|x|^4}\cdot\nabla(|f|^2)dx
=-\frac{n-4}{2}\left\|\frac{f}{|x|^2}\right\|_{L^2(\Bbb R^n)}^2. \label{eq6-int}
\end{align}
It follows from \eqref{first-by-parts}, \eqref{eq4-int}, \eqref{eq5-int} and \eqref{eq6-int} that 
\begin{align}
&\notag\frac{n(n-4)}{4}\left\|\frac{f}{|x|^2}\right\|_{L^2(\Bbb R^n)}^2\\
&=-\operatorname{Re}\int_{\Bbb R^n}\frac{f(x)}{|x|^2}
\overline{
\left(
\partial_r^2 f(x)+\frac{n-1}{|x|}\partial_r f(x)
\right)
}dx-\left\|
|x|^{-\frac{n}{2}+1}\partial_r(|x|^{\frac{n}{2}-2}f)
\right\|_{L^2(\Bbb R^n)}^2.\label{eq7-int}
\end{align}
Then \eqref{thm1-rellich} follows from \eqref{eq7-int} by applying Lemma \ref{lem-bector} with ${\cal H}=L^2(\Bbb R^n)$, 
$u=\frac{f}{|x|^2}$, $v=\partial_r^2 f+\frac{n-1}{|x|}\partial_r f$, $c=\frac{4}{n(n-4)}$ and 
$
a=-c\|
|x|^{-\frac{n}{2}+1}\partial_r(|x|^{\frac{n}{2}-2}f)
\|_{L^2(\Bbb R^n)}^2
$. 

\medskip

We now assume that $f\in H^2(\Bbb R^n)$ satisfies \eqref{assume-eq}. 
Then by \eqref{thm1-rellich}, it follows $\partial_r(|x|^{\frac{n}{2}-2}f)=0$, which is equivalent to the existence of $\varphi:{\Bbb S}^{n-1}\to\Bbb C$ 
such that $|x|^{\frac{n}{2}-2}f(x)=\varphi(\frac{x}{|x|})$ almost everywhere. 
In that case $\frac{f}{|x|^2}\in L^2(\Bbb R^n)$ if and only if $\frac{1}{|x|^n}|\varphi(\frac{x}{|x|})|^2\in L^1(\Bbb R^n)$, 
where the last condition if and only if $\varphi\equiv 0$, which in turn implies $f\equiv 0$. 
In the case where $f\in H^2(\Bbb R^n)$ satisfies \eqref{assume-eq2}, 
where the problem is reduced to the case \eqref{assume-eq} just we have argued if we can prove \eqref{rellich-cor}. 
Therefore, the proof of the last part of the theorem will be completed after the completion of the proof of Corollary \ref{cor-rellich}. 

\section{Proof of Theorem \ref{thm2}}
\noindent

We start with the equality 
\begin{align*}
\Delta f=\partial_r^2 f+\frac{n-1}{|x|}\partial_r f+\sum_{j=1}^n L_j^2 f, 
\end{align*}
which is verified by a direct calculation. Then we expand the scalar product as 
\begin{align}
\label{thm2-prf1}\|\Delta f\|_{L^2(\Bbb R^n)}^2
=\left\|
\partial_r^2 f+\frac{n-1}{|x|}\partial_r f
\right\|_{L^2(\Bbb R^n)}^2
+\left\|\sum_{j=1}^n L_j^2 f\right\|_{L^2(\Bbb R^n)}^2
+2\operatorname{Re}\left(
\partial_r^2f+\frac{n-1}{|x|}\partial_r f\Bigg|\sum_{j=1}^n L_j^2 f
\right).  
\end{align}
From now on we consider the last scalar product. For simplicity, let 
\begin{align*}
&g=\partial_r^2 f+\frac{n-1}{|x|}\partial_r f=|x|^{-n+1}\partial_r(|x|^{n-1}\partial_r f)
\text{ \,and \,}h_j=L_j f. 
\end{align*}
By integration by parts, 
\begin{align*}
(g|L_j h_j)=-(L_jg|h_j)+(n-1)\left(g\Big|\frac{x_j}{|x|^2}h_j\right). 
\end{align*}
This gives 
\begin{align}\label{exchange-2}
\left(g\Bigg|\sum_{j=1}^n L_j^2 f\right)=-\sum_{j=1}^n(L_j g|h_j)
\end{align}
since $\sum_{j=1}^n x_jL_j=0$. We also notice that $L_j\partial_r=\left(\partial_r+\frac{1}{|x|}\right)L_j$ 
and that $L_j(|x|^\lambda u)=|x|^\lambda L_j u$ for any $\lambda\in\Bbb R$ to obtain 
\begin{align}
L_jg&\notag=L_j(
|x|^{-n+1}\partial_r(|x|^{n-1}\partial_r f)
)=|x|^{-n+1}L_j\partial_r(|x|^{n-1}\partial_r f)\\
&\notag=|x|^{-n+1}\left(\partial_r+\frac{1}{|x|}\right)L_j(|x|^{n-1}\partial_r f)
=|x|^{-n+1}\left(\partial_r+\frac{1}{|x|}\right)|x|^{n-1}L_j\partial_r f\\
&\label{exchange-1}=|x|^{-n+1}\left(\partial_r+\frac{1}{|x|}\right)|x|^{n-1}\left(\partial_r+\frac{1}{|x|}\right)h_j
=\partial_r^2h_j+\frac{n+1}{|x|}\partial_r h_j+\frac{n-1}{|x|^2}h_j. 
\end{align}
By \eqref{exchange-1} and \eqref{l2-earlier}, the real part of the left hand side of \eqref{exchange-2} is calculated as 
\begin{align}
-\operatorname{Re}\sum_{j=1}^n(L_jg|h_j)
&\notag=-\sum_{j=1}^n\operatorname{Re}\left(
(\partial_r^2 h_j|h_j)+(n+1)\operatorname{Re}\left(\frac{1}{|x|}\partial_r h_j\Bigg|h_j\right)
+(n-1)\left\|\frac{1}{|x|}h_j\right\|_{L^2(\Bbb R^n)}^2
\right)\\
&\notag=-\sum_{j=1}^n\left(
\frac{(n-1)(n-2)}{2}\left\|
\frac{1}{|x|}h_j
\right\|_{L^2(\Bbb R^n)}^2-\|\partial_r h_j\|_{L^2(\Bbb R^n)}^2\right.\\
&\notag\phantom{=}\left.-\frac{(n+1)(n-2)}{2}\left\|
\frac{1}{|x|}h_j
\right\|_{L^2(\Bbb R^n)}^2
+(n-1)\left\|\frac{1}{|x|}h_j\right\|_{L^2(\Bbb R^n)}^2
\right)\\
&\notag=\sum_{j=1}^n\left(
\|\partial_r h_j\|_{L^2(\Bbb R^n)}^2-\left\|\frac{1}{|x|}h_j\right\|_{L^2(\Bbb R^n)}^2
\right)\\
&\label{thm2-prf2}=\sum_{j=1}^n\left(
\frac{n(n-4)}{4}\left\|\frac{1}{|x|}h_j\right\|_{L^2(\Bbb R^n)}^2
+\||x|^{-\frac{n}{2}+1}
\partial_r(|x|^{\frac{n}{2}-1}h_j)
\|_{L^2(\Bbb R^n)}^2
\right).
\end{align}
By \eqref{thm2-prf1}, \eqref{exchange-2} and \eqref{thm2-prf2}, we obtain \eqref{thm2-state}. 

\begin{proof}[Proof of Corollary \ref{cor-rellich}]
The inequality \eqref{rellich-cor} follows immediately from \eqref{thm2-state}. In \eqref{rellich-cor}, 
equality holds only if $\sum_{j=1}^n L_j^2 f=0$, which is equivalent to the fact that $f$ is radial since 
\begin{align*}
\frac{1}{|x|^2}\sum_{1\leq j<k\leq n}(x_j\partial_k-x_k\partial_j)^2f=\sum_{j=1}^n L_j^2 f. 
\end{align*}
Conversely, if $f$ is radial, then $L_jf=0$ for all $j$ and \eqref{rellich-cor} is realized as an equality. 
\end{proof}

\end{document}